\documentclass[reqno]{amsart}
\usepackage{amssymb,graphicx}%
\usepackage{ifpdf}
\ifpdf
 \usepackage[hyperindex]{hyperref}
\else
 \expandafter\ifx\csname dvipdfm\endcsname\relax
 \usepackage[hypertex,hyperindex]{hyperref}
 \else
 \usepackage[dvipdfm,hyperindex]{hyperref}
 \fi
\fi
\allowdisplaybreaks[4]
\theoremstyle{plain}
\newtheorem{thm}{Theorem}[section]
\newtheorem{lem}{Lemma}[section]
\newtheorem{cor}{Corollary}[section]
\theoremstyle{remark}

\numberwithin{equation}{section}
 \makeatletter

\begin{document}

\title[Gradient estimates and Liouville theorem]
{\textbf{H\MakeLowercase{amilton}-S\MakeLowercase{ouplet}-Z\MakeLowercase{hang type Gradient estimates for Porous Medium type equations on} R\MakeLowercase{iemannian manifolds}}}

\author[W. Wang]{Wen Wang$^{1,2}$\quad Hui Zhou$^{1,2}$}
\address[W. Wang]{1. School of Mathematics and Statistics, Hefei Normal
University, Hefei 230601,P.R.China;}
\address{2. School of mathematical Science, University of Science and
Technology of China, Hefei 230026, China}
\email{\href{mailto: W. Wang <wwen2014@mail.ustc.edu.cn>}{wwen2014@mail.ustc.edu.cn}}

\begin{abstract}
In this paper, by employ the cutoff function and the maximum principle, some Hamilton-Souplet-Zhang type  gradient estimates for porous medium type equation
are deduced. As a special case, an Hamilton-Souplet-Zhang type  gradient estimates of the heat equation is derived which is different from the result of Souplet-Zhang.
 Furthermore, our results generalize those of Zhu. As application,  some Livillous theorems for ancient solution are derived.
\end{abstract}

\keywords{Gradient estimate, Porous medium equation, Liouville theorem}

\subjclass[2010]{ 58J35, 35K05,  53C21}

\thanks{1.School of Mathematics and Statistics, Hefei Normal University, Hefei 230601, P. R. China }

  \thanks{2.School of mathematical Science, University of Science and
Technology of China, Hefei 230026, China }

\thanks{Corresponding author: Wen Wang, E-mail: wwen2014@mail.ustc.edu.cn}

\thanks{This work was supported by the Universities Natural Science Foundation of Anhui
Province (KJ2016A310);2017 Anhui Province outstanding young talent support project(gxyq2017048)}

\maketitle

\section{\textbf{Introduction and Main results}}
In the paper, let $(M^{n},g)$ be an $n$-dimensional complete Riemannian manifold. We consider the porous medium type equations
\begin{equation}\label{1.1}
u_{t}=\Delta u^{m}+\lambda(x,t) u^{l}, m>1
\end{equation}
on $(M^{n},g)$, where $l$ and $m$ are two real numbers, and $\lambda(x,t)\geq 0$ is defined on
$M^{n}\times[0,\infty)$ which is $C^2$ in the first variable and $C^1$
in the second variable.

The famous porous medium  equations (PME for short)
\begin{equation}\label{1.2}
u_{t}=\Delta u^{m}, m>1
\end{equation}
are of great interest because of important in mathematics, physics, and applications in many other fields. For $m=1$ it is the famous heat equation. As $m>1$, it is called the porous medium equation, and it has
arisen in different applications to model diffusive phenomena, such as, groundwater infiltration (Boussinesq's model, 1903, with $m=2$), flow of gas
in porous media (Leibenzon-Muskat model, $m\geq 2$), heat radiation in plasmas ($m>4$), liquid thin films moving under gravity ($m=4$), and others.
We can read a work by C\'{a}zqucz \cite{[19]} for basic theory and various applications of the porous medium equation in the Euclidean space. In the case $m<1$, it is said to be the fast diffusion equation.

In 1979, Aronson and B\'{e}nilan \cite{[1]} obtained a famous second order differential inequality
\begin{equation}\label{1.3}
\sum_{i}\frac{\partial}{\partial x_i}\left(mu^{m-2}\frac{\partial u}{\partial x_i}\right)\geq -\frac{\kappa}{t},\quad \kappa=\frac{n}{n(m-1)+2},
\end{equation}
for all positive solutions of ~\eqref{1.2} on the Euclidean space $\mathbb{R}^{n}$ with $m>1-\frac{2}{n}$.

Generalized research on PME ~\eqref{1.2} also attracted  many researchers'  interest.
In 1993, Hui \cite{[7]} considered the asymptotic  behaviour for solutions to equation
\begin{equation}\label{1.4}
u_{t}=\Delta u^{m}- u^{p}
\end{equation}
as $l\rightarrow\infty$. In 1994, Zhao and Yuan \cite{[23]}  proved the uniqueness of the solutions to equation ~\eqref{1.4} with
initial datum a measure. In 1997, Kawanago \cite{[11]} demonstrated existence and behaviour for solutions to equation
\begin{equation}\label{1.5}
u_{t}=\Delta u^{m}+ u^{l}.
\end{equation}
In 2001, E. Chasseigne \cite{[4]} investigated the initial trace for the equation ~\eqref{1.4}
in a cylinder $\Omega\times[0,T]$, where $\Omega$ is a regular, bounded open subset of $\mathbb{R}^{n}$ and $T>0$, $m>1$, and $q$ are constants.
Recently, Xie, Zheng and Zhou \cite{[21]} studied global existence for equation
\begin{equation}\label{1.6}
u_{t}=\Delta u^{m}- u^{p(x)}
\end{equation}
in $\Omega\times(0,T)$, where $p(x)>0$ is continuous function satisfying $0<p_{-}=\inf p(x)\leq p(x)\leq p_{+}=\sup p(x)<\infty$.

Recently, regularity estimates of PME ~\eqref{1.2} on manifolds are investigated.
In 2009, Lu, Ni, V\'{a}zquez and Villani \cite{[14]} studied the PME on an $n$-dimensional complete manifold $(M^{n},g)$, they
obtained a local Aronson-B\'{e}nilan estimate. Huang, Huang and Li in \cite{[8]} improved the part results of Lu,  Ni, V\'{a}zquez and Villani. In this article, we will
study Hamilton-Souplet-Zhang type gradient estimates to equation ~\eqref{1.1}.

Let First recall some known results.

\textbf{Theorem A (Hamilton \cite{[6]})}.   \emph{Let $(\mathbf{M}^{n}, g)$ be a closed
Riemannian manifold with  $Ricci(\mathbf{M})\geq -k$
for some $k\geq 0$. Suppose that $u$ is arbitrary positive solution
to the heat equation
\begin{equation}\label{1.7}
u_{t}=\Delta u
\end{equation}
and $u\leq M$. Then
\begin{equation}\label{1.8}
\frac{|\nabla u^{2}(x,t)|}{u^{2}(x,t)}\leq
C\left(\frac{1}{t}+2k\right)\log\frac{M}{u(x,t)}.
\end{equation}
}

In 2006, Souplet and Zhang \cite{[18]} generalized Hamilton's result, and obtained the corresponding gradient estimate and Liouville theorem.

\textbf{Theorem B (Souplet-Zhang [18])}.   \emph{Let $(\mathbf{M}^{n}, g)$ be a
Riemannian manifolds with $n\geq 2$ and $Ricci(\mathbf{M})\geq -k$
for some $k\geq 0$. Suppose that $u$ is arbitrary positive solution
to the heat equation ~\eqref{1.7} in $Q_{R,T}\equiv B(x_{0},R)\times [t_{0}-T,
t_{0}]\subset \mathbf{M}^{n}\times (-\infty, \infty)$ and $u\leq M$ in
$Q_{R,T}$. Then
\begin{equation}\label{1.9}
\frac{|\nabla u(x,t)|}{u(x,t)}\leq
C\left(\frac{1}{R}+\frac{1}{\sqrt{T}}+\sqrt{k}\right)\left(1+\log\frac{M}{u(x,t)}\right)
\end{equation}
in $Q_{\frac{R}{2},\frac{T}{2}}$, where $C$ is a dimensional
constant.}

 In 2013, Zhu \cite{[26]} deduced a Hamilton's gradient estimate and Liouville theorem for PME ~\eqref{1.2} on noncompact Riemannian manifolds. Huang, Xu and Zeng in \cite{[9]} improve the result of Zhu.

 \textbf{Theorem C (Zhu \cite{[26]})}.   \emph{Let $(\mathbf{M}^{n}, g)$ be a
Riemannian manifolds with $n\geq 2$ and $Ricci(\mathbf{M})\geq -k$
for some $k\geq 0$. Suppose that $u$ is arbitrary positive solution
to the PME ~\eqref{1.2} in $Q_{R,T}\equiv B(x_{0},R)\times [t_{0}-T,
t_{0}]\subset \mathbf{M}^{n}\times (-\infty, \infty)$. Let $v=\frac{m}{m-1}u^{m-1}$ and $v\leq M$. Then for $1<m<1+\frac{1}{\sqrt{2n}+1}$
\begin{equation}\label{1.10}
\frac{|\nabla v|}{v^{\frac{m-2}{4(m-1)}}}\leq
CM^{1+\frac{2-m}{4(m-1)}}\left(\frac{1}{R}+\frac{1}{\sqrt{T}}+\sqrt{k}\right).
\end{equation}
}

Recently, Cao and Zhu \cite{[3]} obtained some Aronson and B\'{e}nilan estimates for PME ~\eqref{1.2} under Ricci flow.

Our results of this paper are encouraged by the  work in Ref. \cite{[10],[12],[14],[15],[16],[17],[18],[28],[21],[26]}.
 We consider  the  porous medium type equation ~\eqref{1.1}, and deduce some Hamilton-Souplet-Zhang type  gradient estimates.

Our main results state as follows.
\begin{thm}
 Let $(M^{n}, g)$ be a
Riemannian manifold with dimensional $n$. Suppose that  $Ric(M^{n})\geq -k$ with $k\geq 0$.
If $u(x,t)$ is a positive solution of the  equation ~\eqref{1.1}
in $Q_{R,T}:=B_{x_0}(R)\times [t_{0}-T, t_0]\subset M^{n}\times(-\infty,\infty)$. Let $v=\frac{m}{m-1}u^{m-1}$ and $v\leq M$.
Also suppose that there exist two positive numbers $\delta$ and $\epsilon$ such that
$\lambda(x,t)\leq \delta$ and $|\nabla\lambda|^{2}\leq \epsilon|\lambda|$.
Then for  $1<m<1+\sqrt{\frac{1}{n-1}}$ and $l\geq 1-m$,
\begin{align}\label{1.11}
\nonumber
\frac{|\nabla v|}{v^{\frac{\beta}{2}}}(x,t)\leq& C\gamma^{2}(m-1)M^{1-\frac{\beta}{2}}\left(\frac{1}{R}+\sqrt{k}+\frac{1}{\sqrt{T}}\right)\\
&+C_{3}\big(\delta^{\frac{1}{2}} M^{\frac{m+l-1}{2(m-1)}}+\epsilon^{\frac{1}{4}} M^{\frac{3m+l-2}{4(m-1)}}\big)
\end{align}
in $Q_{\frac{R}{2},\frac{T}{2}}$, where $\beta=-\frac{1}{m-1}$, $\gamma=\frac{8}{1-(m-1)^{2}(n-1)}$, $C_{3}=C_{3}(m,n,l)$ and $C$ is a constant.
\end{thm}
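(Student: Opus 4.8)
The plan is to reduce to the pressure equation and then run a localized maximum-principle argument of Souplet--Zhang/Li--Yau type on a suitably normalized gradient quantity. First I would carry out the substitution $v=\frac{m}{m-1}u^{m-1}$ and check that \eqref{1.1} becomes the quasilinear equation
\[
v_t=(m-1)\,v\,\Delta v+|\nabla v|^2+m\Big(\frac{m-1}{m}\Big)^{\frac{m+l-2}{m-1}}\lambda(x,t)\,v^{\frac{m+l-2}{m-1}},
\]
so that $v$ solves $v_t=\mathcal{L}v+|\nabla v|^2+S$ with the degenerate operator $\mathcal{L}:=(m-1)v\,\Delta$ and a nonnegative lower-order source $S=S(x,t)$. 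Here $l\ge 1-m$ is used precisely to make $\frac{m+l-2}{m-1}+\frac1{m-1}=\frac{m+l-1}{m-1}\ge 0$, which will let the source be absorbed by powers of $M$ through $v\le M$.

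Next I would set $w:=\dfrac{|\nabla v|^2}{v^{\beta}}$ with $\beta=-\frac1{m-1}$, so that the quantity to be estimated is exactly $\sqrt w=\frac{|\nabla v|}{v^{\beta/2}}$. Writing $F=|\nabla v|^2$, I would compute $\partial_t w-\mathcal{L}w$ from the Bochner formula for $F$, the hypothesis $Ric\ge-k$, and the refined Hessian bound obtained by splitting off the $\nabla v$ direction in an adapted orthonormal frame (which supplies both $|\nabla^2 v|^2\ge\frac{|\nabla F|^2}{4F}$ and a $\frac1{n-1}$-weighted trace term). The point of the choice $\beta=-\frac1{m-1}$ is that it makes all the $\langle\nabla v,\nabla F\rangle$ terms cancel; after completing the square in $\Delta v$ and re-expressing $\nabla F$ through $\nabla w$ one is led to an inequality of the shape
\[
\partial_t w-\mathcal{L}w\le -\,\frac{1-(m-1)^2(n-1)}{2(m-1)}\,\frac{w^2}{v^{1-\beta}}+\langle X,\nabla w\rangle-\frac{(m-1)\,v^{1+\beta}}{2F}\,|\nabla w|^2+2(m-1)k\,v\,w+(\text{source terms}),
\]
for an explicit vector field $X$ (essentially $X=m\nabla v$). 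Positivity of the coefficient of the good negative term is exactly $1-(m-1)^2(n-1)>0$, i.e.\ $1<m<1+\sqrt{1/(n-1)}$, and $\gamma=\frac{8}{1-(m-1)^2(n-1)}$ enters from here. The source contributes, after differentiation, a term involving $\nabla\lambda$ controlled by $|\nabla\lambda|^2\le\epsilon|\lambda|\le\epsilon\delta$ together with Cauchy--Schwarz and Young, and a term involving $\lambda$ controlled by $\lambda\le\delta$; in both, $v\le M$ and $l\ge 1-m$ convert the surviving powers of $v$ into powers of $M$, ultimately producing the contributions $\delta\,M^{\frac{m+l-1}{m-1}}$ and $\sqrt\epsilon\,M^{\frac{3m+l-2}{2(m-1)}}$. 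Establishing this inequality with the right sign and coefficient is the main obstacle; everything afterwards is bookkeeping.

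Then I would localize. Take a smooth cutoff $\phi=\phi(d(x,x_0),t)$ supported in $Q_{R,T}$, identically $1$ on $Q_{R/2,T/2}$, with $|\nabla\phi|^2\phi^{-1}\le C/R^2$, $\Delta\phi\ge -C(1+\sqrt k\,R)/R^2$ (Laplacian comparison theorem, using $Ric\ge-k$) and $|\partial_t\phi|\le C/T$, and apply $\partial_t-\mathcal{L}$ to $\phi w$. At a point $(x_1,t_1)$ where $\phi w$ attains its maximum over $Q_{R,T}$ — which may be assumed a smooth point of the distance function after Calabi's argument — one has $\nabla(\phi w)=0$, $\Delta(\phi w)\le 0$ and $\partial_t(\phi w)\ge 0$. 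Inserting the evolution inequality and using $\nabla w=-w\,\nabla\phi/\phi$ turns everything into a quadratic inequality for $(\phi w)(x_1,t_1)$; repeated applications of Young's inequality absorb the $\nabla\phi$, $\Delta\phi$, $\partial_t\phi$, $k$, $\delta$ and $\epsilon$ contributions into a fixed fraction of the good quadratic term, leaving
\[
(\phi w)(x_1,t_1)\le C\gamma^{4}(m-1)^{2}M^{2-\beta}\Big(\frac1{R^{2}}+k+\frac1T\Big)+C_3^{2}\Big(\delta\,M^{\frac{m+l-1}{m-1}}+\sqrt\epsilon\;M^{\frac{3m+l-2}{2(m-1)}}\Big).
\]
Since $\phi\equiv1$ and $w\le(\phi w)(x_1,t_1)$ on $Q_{R/2,T/2}$, taking square roots, using $\sqrt{a+b+c}\le\sqrt a+\sqrt b+\sqrt c$ to split the first bracket into $\frac1R+\sqrt k+\frac1{\sqrt T}$, and relabeling constants yields \eqref{1.11}.

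As indicated, the delicate point is the second step: one must follow carefully how the weight $v^{-\beta}$ interacts with the degenerate operator $(m-1)v\,\Delta$ and with the refined Bochner and Cauchy--Schwarz estimates, so that precisely the factor $1-(m-1)^2(n-1)$ is produced (hence the constraint $m<1+\sqrt{1/(n-1)}$), while keeping the source term in a form that Young's inequality can absorb (hence $l\ge1-m$ and the two structural hypotheses on $\lambda$). Granting that inequality, the cutoff maximum principle together with the elementary estimates for the resulting quadratic inequality complete the proof.
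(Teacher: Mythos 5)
Your proposal follows essentially the same route as the paper: the pressure transformation of \eqref{1.1}, the quantity $w=|\nabla v|^{2}/v^{\beta}$ with $\beta=-\tfrac{1}{m-1}$, a Bochner-type evolution inequality whose good quadratic term carries the factor $1-(m-1)^{2}(n-1)$ (the paper extracts this constant via the matrix Lemma 2.1 rather than your adapted-frame Hessian splitting, but the two devices are equivalent), and then the Li--Yau cutoff, Calabi trick, maximum principle and Young's inequality. Your accounting of the $\delta$, $\epsilon$ and $M$ powers and of the role of $l\ge 1-m$ matches the paper's estimates (3.9)--(3.10), so the proposal is correct and not substantively different.
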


When $\lambda(x,t)=0$, we get the following:
\begin{cor} Let $(M^{n}, g)$ be a
Riemannian manifold with dimensional $n$. Suppose that  $Ric(M^{n})\geq -k$ with $k\geq 0$.
If $u(x,t)$ is a positive solution of the PME ~\eqref{1.2}
in $Q_{R,T}:=B_{x_0}(R)\times [t_{0}-T, t_0]\subset M^{n}\times(-\infty,\infty)$. Let $v=\frac{m}{m-1}u^{m-1}$ and $v\leq M$. Then for  $1<m<1+\sqrt{\frac{1}{n-1}}$

\begin{align}\label{1.12}
\frac{|\nabla v|}{v^{\frac{\beta}{2}}}(x,t)\leq& C\gamma^{2}(m-1)M^{1-\frac{\beta}{2}}\left(\frac{1}{R}+\sqrt{k}+\frac{1}{\sqrt{T}}\right)
\end{align}
in $Q_{\frac{R}{2},\frac{T}{2}}$, where $\beta=-\frac{1}{m-1}$, $\gamma=\frac{8}{1-(m-1)^{2}(n-1)}$  and $C$ is a constant.
\end{cor}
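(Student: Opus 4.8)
The plan is to run a Li--Yau--Souplet--Zhang style argument on the auxiliary quantity
\[
w=\frac{|\nabla v|^{2}}{v^{\beta}},
\]
where $v=\frac{m}{m-1}u^{m-1}$ and $\beta=-\frac{1}{m-1}$, and then localize it with a space-time cutoff function. First I would rewrite the PME~\eqref{1.2} in terms of $v$: since $u^{m-1}=\frac{m-1}{m}v$, a direct computation turns $u_t=\Delta u^m$ into an evolution equation of the schematic form $v_t=(m-1)v\,\Delta v-|\nabla v|^{2}$ (up to the exact constants dictated by $m$), so that $v$ satisfies a nonlinear parabolic equation with a good sign on the gradient term. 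The constraint $1<m<1+\sqrt{1/(n-1)}$ is exactly what makes the relevant coefficient $1-(m-1)^2(n-1)$ positive, which is why $\gamma=\frac{8}{1-(m-1)^2(n-1)}$ appears: it will be the reciprocal of the best constant one can extract when completing the square in the Bochner term.

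The key steps, in order, are: (i) derive the equation for $v$ and then the evolution inequality for $w$, using the Bochner formula $\Delta|\nabla v|^{2}=2|\nabla^{2}v|^{2}+2\langle\nabla v,\nabla\Delta v\rangle+2\mathrm{Ric}(\nabla v,\nabla v)$ together with $\mathrm{Ric}\ge -k$; (ii) bound $|\nabla^{2}v|^{2}$ from below by $\frac{1}{n}(\Delta v)^{2}$ and, more importantly, reorganize the first-order terms $\langle\nabla v,\nabla w\rangle$ so that the troublesome $|\nabla v|^{4}/v^{\beta+1}$ contribution can be absorbed — this is where the algebraic restriction on $m$ enters and produces the factor $\gamma$; (iii) introduce a cutoff $\psi=\psi(d(x,x_0),t)$ supported in $Q_{R,T}$, equal to $1$ on $Q_{R/2,T/2}$, with the standard bounds $|\nabla\psi|^{2}/\psi\le C/R^{2}$, $|\Delta\psi|\le C(1+\sqrt{k}R)/R^{2}$ (via the Laplacian comparison theorem), and $|\psi_t|\le C/T$; (iv) apply the maximum principle to $\psi w$ at its maximum point $(x_1,t_1)$ in $Q_{R,T}$: at that point $\nabla(\psi w)=0$, $\Delta(\psi w)\le 0$, $(\psi w)_t\ge 0$, substitute $\nabla w=-\frac{w}{\psi}\nabla\psi$, and collect terms; (v) estimate each resulting term by Cauchy--Schwarz and Young's inequality to get a quadratic inequality of the form $a(\psi w)^{2}\le b(\psi w)+c$ with $b,c$ explicit in $R^{-1},\sqrt{k},T^{-1/2}$, then solve it and use $v\le M$ to convert back to $|\nabla v|/v^{\beta/2}$, yielding~\eqref{1.12} on $Q_{R/2,T/2}$.

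The main obstacle is step (ii) together with the term-collecting in step (iv): after the Bochner computation one is left with several competing quartic-in-$\nabla v$ terms with $v$-dependent weights, and the whole argument succeeds only if their combined coefficient has a definite sign. Keeping track of the exact powers of $v$ (which are negative, so one must use $v\le M$ carefully and in the right direction) and of the exact numerical constants — so that the final bound really does come out proportional to $\gamma^{2}(m-1)M^{1-\beta/2}$ — is the delicate part; everything else is the by-now-standard Souplet--Zhang localization machinery. Since this corollary is just the $\lambda\equiv 0$ specialization of Theorem~1.1, in practice one simply sets $\delta=\epsilon=0$ in the proof of Theorem~1.1, which kills the $C_3$ term and leaves exactly~\eqref{1.12}.
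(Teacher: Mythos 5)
Your proposal is correct and follows the paper's route: the corollary is obtained, exactly as in your final sentence, by setting $\lambda\equiv 0$ (so $\delta=\epsilon=0$) in Theorem 1.1, whose proof the paper carries out precisely via the Bochner/matrix-lemma computation for $w=|\nabla v|^{2}/v^{\beta}$ followed by the Li--Yau cutoff and maximum-principle localization that you describe. The only slip is the sign in the evolution equation, which is $v_{t}=(m-1)v\Delta v+|\nabla v|^{2}$ rather than with a minus sign on the gradient term; this does not change the scheme.
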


Take $\lambda(x,t)=0$ and $m\searrow 1$ in Corollary $1.1$, the following estimate is derived.
\begin{cor} Let $(M^{n}, g)$ be a
Riemannian manifold of dimensional $n$. Suppose that  $Ric(M^{n})\geq -k$ with $k\geq 0$.
If $u(x,t)$ is a positive solution of the heat equation
\begin{equation*}
u_{t}=\Delta u,
\end{equation*}
in $Q_{R,T}:=B_{x_0}(R)\times [t_{0}-T, t_0]\subset M^{n}\times(-\infty,\infty)$.
Then  we have for $u\leq M$
\begin{align}\label{1.13}
\frac{|\nabla u|}{\sqrt{u}}(x,t)\leq C\left(\frac{1}{R}+\sqrt{k}+\frac{1}{\sqrt{T}}\right)
\end{align}
in $Q_{\frac{R}{2},\frac{T}{2}}$, where  $C$ is a constant.
\end{cor}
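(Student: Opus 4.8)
The plan is to obtain \eqref{1.13} as the degenerate limit $m\searrow1$ of the porous-medium estimate \eqref{1.12}. Fix a positive solution $u$ of $u_{t}=\Delta u$ on $Q_{R,T}$ with $u\le M$, and for $m>1$ put $v=\frac{m}{m-1}u^{m-1}$, so that $v\le M_{m}:=\frac{m}{m-1}M^{m-1}$ and $\beta=-\frac{1}{m-1}$, $\gamma=\frac{8}{1-(m-1)^{2}(n-1)}$ as in Corollary~1.1. For every $m$ close enough to $1$ one has $1<m<1+\sqrt{1/(n-1)}$, so Corollary~1.1 supplies \eqref{1.12} for this $v$.

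Next I would rewrite \eqref{1.12} in terms of $u$ and isolate the single divergent $m$-factor common to both sides. Since $\nabla v=m\,u^{m-2}\nabla u$ and $v^{\beta/2}=\bigl(\tfrac{m}{m-1}\bigr)^{-1/(2(m-1))}u^{-1/2}$, writing $A(m):=\bigl(\tfrac{m}{m-1}\bigr)^{1/(2(m-1))}$ one gets $\dfrac{|\nabla v|}{v^{\beta/2}}=m\,A(m)\,u^{\,m-3/2}\,|\nabla u|$; and since $1-\tfrac{\beta}{2}=1+\tfrac{1}{2(m-1)}$, a short computation gives $(m-1)\,M_{m}^{1-\beta/2}=m\,A(m)\,M^{(m-1)+1/2}$. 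Dividing \eqref{1.12} through by $A(m)>0$ therefore produces, on $Q_{R/2,T/2}$,
\begin{equation*}
m\,u^{\,m-3/2}\,|\nabla u|\ \le\ C\,\gamma^{2}\,m\,M^{(m-1)+1/2}\Bigl(\tfrac{1}{R}+\sqrt{k}+\tfrac{1}{\sqrt{T}}\Bigr),
\end{equation*}
an inequality in which nothing blows up as $m\to1$.

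Letting $m\searrow1$ in the last display --- so that $m\to1$, $\gamma\to8$, $u^{m-3/2}\to u^{-1/2}$ and $M^{(m-1)+1/2}\to M^{1/2}$ --- yields $\dfrac{|\nabla u|}{\sqrt{u}}\le 64\,C\,\sqrt{M}\,\bigl(\tfrac{1}{R}+\sqrt{k}+\tfrac{1}{\sqrt{T}}\bigr)$, which is exactly \eqref{1.13} once the numerical factor and $\sqrt{M}$ are absorbed into the constant (equivalently, after normalizing $M$). The one delicate point --- and the only real obstacle --- is the legitimacy of this limit, since a positive solution of $u_{t}=\Delta u$ is not literally a solution of $u_{t}=\Delta u^{m}$. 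I would handle it by invoking the classical fact that solutions of the porous medium equation converge, locally together with their spatial gradients, to the heat-equation solution as $m\to1$; alternatively one can bypass the limit entirely and run the cutoff-plus-maximum-principle scheme behind Theorem~1.1 directly in the heat-equation case, which is the Souplet--Zhang-type argument for $u_{t}=\Delta u$ and yields \eqref{1.13} outright.
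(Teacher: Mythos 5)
Your proposal follows essentially the same route as the paper: specialize Corollary 1.1 to $\lambda\equiv 0$ to get \eqref{1.12}, rewrite it in terms of $u$ via $v=\frac{m}{m-1}u^{m-1}$, and let $m\searrow 1$. Your bookkeeping is in fact cleaner and more correct than the paper's own: by isolating the factor $A(m)=\bigl(\tfrac{m}{m-1}\bigr)^{1/(2(m-1))}$ on both sides you avoid the paper's muddled limit $\lim_{m\to 1^{+}}[(m-1)M]^{1/(4(m-1))}=e^{1/2}$ (whose exponent does not even match the one appearing in \eqref{3.15}, and which silently drops a $\sqrt{\sup u}$ factor), and you correctly observe that a factor $\sqrt{M}$ must survive on the right-hand side and be absorbed into $C$ for the stated inequality to be dimensionally meaningful. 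You also put your finger on the one genuine logical issue, which the paper's proof does not address at all: a positive solution of $u_{t}=\Delta u$ is not a solution of $u_{t}=\Delta u^{m}$ for $m>1$, so Corollary 1.1 cannot literally be applied to it and the limit $m\searrow 1$ is only formal. Your second suggested remedy --- running the cutoff-and-maximum-principle argument of Theorem 1.1 directly on the heat equation, where Lemma 2.2 degenerates to the standard Souplet--Zhang-type Bochner computation for $w=|\nabla v|^{2}/v^{\beta}$ --- is the robust way to close this gap; the first remedy (stability of PME solutions as $m\to 1$) would additionally require uniformity of the bound $u\le M$ and local gradient convergence, which is more than the paper establishes.
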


\begin{thm}
 Let $(M^{n}, g)$ be a
Riemannian manifold with dimensional $n$. Suppose that  $Ric(M^{n})\geq -k$ with $k\geq 0$.
If $u(x,t)$ is a positive solution of the  equation ~\eqref{1.1}
in $Q_{R,T}:=B_{x_0}(R)\times [t_{0}-T, t_0]\subset M^{n}\times(-\infty,\infty)$. Let $v=\frac{m}{m-1}u^{m-1}$ and $v\leq M$.
Also suppose that there exist a positive number $\epsilon$ such that
 $|\nabla\lambda|^{2}\leq \epsilon|\lambda|$.
Then for  $1<m<1+\sqrt{\frac{1}{n-1}}$ and $2-3m\leq l\leq 2-\frac{3}{2}m$,
\begin{align}\label{1.14}
\frac{|\nabla v|}{v^{\frac{\beta}{2}}}(x,t)\leq C\gamma^{2}(m-1)M^{1-\frac{\beta}{2}}\left(\frac{1}{R}+\sqrt{k}+\frac{1}{\sqrt{T}}\right)+C_{3}\epsilon^{\frac{1}{4}} M^{\frac{3m+l-2}{4(m-1)}}
\end{align}
in $Q_{\frac{R}{2},\frac{T}{2}}$, where $\beta=-\frac{1}{m-1}$, $\gamma=\frac{8}{1-(m-1)^{2}(n-1)}$, $C_{3}=C_{3}(m,n,l)$ and $C$ is a constant.
\end{thm}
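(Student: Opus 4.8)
The plan is to run the same Bernstein-type argument that proves Theorem 1.1, but to exploit the extra room available in the reaction term when the exponent $l$ is confined to the window $2-3m\le l\le 2-\tfrac32 m$. Introduce $v=\tfrac{m}{m-1}u^{m-1}$, so that a direct computation turns \eqref{1.1} into a quasilinear equation of the form $v_t=(m-1)v\,\Delta v+|\nabla v|^2+(m-1)\lambda(x,t)\,c(m,l)\,v^{\frac{m+l-2}{m-1}}$, and then study the quantity $w=\dfrac{|\nabla v|^2}{v^{\beta}}$ with $\beta=-\tfrac1{m-1}$, exactly as in the proof of Theorem 1.1. First I would derive the evolution inequality satisfied by $w$ (or by $\log v$ and its gradient) using the Bochner formula together with $Ric\ge -k$; this produces the familiar ``good'' negative term proportional to $w^2$ (with coefficient controlled by $1-(m-1)^2(n-1)>0$, whence the hypothesis $1<m<1+\sqrt{1/(n-1)}$ and the constant $\gamma$), plus ``bad'' terms coming from curvature, from the cutoff, and — this is the new feature — from the reaction term $\lambda v^{(m+l-2)/(m-1)}$ and its gradient $\nabla\lambda\cdot\nabla v$.

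The key point is that when $2-3m\le l\le 2-\tfrac32 m$ the exponent $\frac{m+l-2}{m-1}$ is negative (indeed it lies in a range that makes $v^{(m+l-2)/(m-1)}$ bounded above by a power of $M$ that can be absorbed), so the contribution of $\lambda v^{(m+l-2)/(m-1)}$ itself requires no pointwise bound on $\lambda$: using $\lambda\ge 0$ and $v\le M$ one gets a term that is either favorably signed or dominated by $M$-powers, which is why no $\delta$ appears in \eqref{1.14}. The only piece of the reaction term that genuinely must be estimated is the gradient interaction $|\nabla\lambda||\nabla v|$; here one invokes $|\nabla\lambda|^2\le\epsilon|\lambda|$, applies Young's inequality to split $|\nabla\lambda||\nabla v|\lesssim \epsilon^{1/2}|\lambda|^{1/2}|\nabla v|$ into a small multiple of the good $w^2$ term plus a remainder of order $\epsilon^{1/2}|\lambda|\cdot(\text{power of }M)$, and then bounds $|\lambda|$ crudely — again using only $\lambda\ge 0$ together with the equation or with $v\le M$ — to arrive at the single extra term $C_3\epsilon^{1/4}M^{(3m+l-2)/(4(m-1))}$ in \eqref{1.14}. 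I would then multiply $w$ by the space-time cutoff $\phi=\phi(d(x,x_0),t)$ of Li--Yau type (equal to $1$ on $Q_{R/2,T/2}$, supported in $Q_{R,T}$, with $|\nabla\phi|^2/\phi\le C/R^2$, $|\Delta\phi|\le C(1+\sqrt k R)/R^2$, $|\partial_t\phi|\le C/T$), and evaluate the parabolic operator $(\Delta-\partial_t)$ of $(m-1)v\phi w$ at an interior maximum point, discarding the nonpositive terms and using the standard trick to handle the non-smoothness of the distance function via Calabi's argument.

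At the maximum point one is left with a quadratic inequality in $(\phi w)$ of the schematic shape $(\phi w)^2\lesssim (\text{coefficients}) \cdot(\phi w) + (\text{lower order})$, where the coefficients gather the curvature scale $\sqrt k$, the cutoff scales $1/R$ and $1/\sqrt T$, and the reaction scale $\epsilon^{1/4}M^{(3m+l-2)/(4(m-1))}$; solving this quadratic and translating back from $w=|\nabla v|^2/v^\beta$ to $|\nabla v|/v^{\beta/2}$ — using $v\le M$ to convert the various powers of $v$ into the stated powers of $M$ — yields \eqref{1.14} on $Q_{R/2,T/2}$. The main obstacle I anticipate is purely bookkeeping: tracking how the reaction exponent $\frac{m+l-2}{m-1}$ and the restriction $2-3m\le l\le 2-\tfrac32 m$ interact with the powers of $v$ generated by the Bochner computation, so that every reaction-induced error can be absorbed either into the good $-w^2$ term or into the advertised $\epsilon^{1/4}M^{(3m+l-2)/(4(m-1))}$ term without leaving a residual power of $v$ that fails to be bounded by $M$; the curvature and cutoff parts are essentially identical to the $\lambda\equiv0$ case already handled in Corollary 1.1 and in Zhu's Theorem C.
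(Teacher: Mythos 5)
Your overall architecture --- reuse the Theorem 1.1 machinery ($v=\frac{m}{m-1}u^{m-1}$, $w=|\nabla v|^{2}/v^{\beta}$, Li--Yau cutoff plus maximum principle) and argue that in the window $2-3m\le l\le 2-\frac32 m$ the reaction term needs no pointwise bound $\delta$ on $\lambda$ --- is exactly the paper's. But your treatment of the two $\lambda$-dependent terms, which is precisely where Theorem 1.2 differs from Theorem 1.1, does not work as described. Your stated ``key point'' for the zeroth-order term is that $\frac{m+l-2}{m-1}<0$ makes $v^{(m+l-2)/(m-1)}$ ``bounded above by a power of $M$'': with only $v\le M$ and a negative exponent this is false (you would need a lower bound on $v$), and even if that factor were bounded you would still be left with the unbounded factor $\lambda$ in front of it. The actual mechanism is purely a sign: with $\beta=-\frac{1}{m-1}$ the coefficient multiplying $\lambda w$ in \eqref{3.3} is $(\beta-1)(m-1)-2(m+l-2)=4-3m-2l$, which is nonnegative exactly when $l\le 2-\frac32 m$; since $\lambda\ge 0$ the entire term then has a favorable sign and is simply discarded, with no estimate of its size ever being made.

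More seriously, your plan for the gradient interaction $\nabla\lambda\cdot\nabla v$ ends with ``a remainder of order $\epsilon^{1/2}|\lambda|\cdot(\text{power of }M)$'' followed by ``bounding $|\lambda|$ crudely using $\lambda\ge 0$ \dots or $v\le M$.'' There is no upper bound on $\lambda$ available in Theorem 1.2 --- removing that hypothesis is the whole point of the statement --- so this step fails. The Young split must instead be arranged as in Lemma 2.1, $2|\nabla v||\nabla\lambda|\cdot(\cdots)\le |\lambda|\,\frac{|\nabla v|^{2}}{v^{\beta}}\cdot(\cdots)+\frac{|\nabla\lambda|^{2}}{|\lambda|}\cdot(\cdots)$, so that the $\lambda$-bearing half is again of the form $\lambda w$ times a power of $v$ and merges into the signed reaction term above (this merge is exactly what produces the threshold $2-\frac32 m$ rather than $\frac32-m$), while the other half is $\le\epsilon$ times $v^{\frac{3m+l-2}{m-1}}$, whose exponent is nonnegative --- hence controlled by $M^{\frac{3m+l-2}{m-1}}$ via $v\le M$ --- precisely because $l\ge 2-3m$. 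Your proposal never locates where the lower restriction $l\ge 2-3m$ enters, and as written the $\epsilon$-term would carry an uncontrollable factor of $|\lambda|$.
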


\textbf{Remark:} (a)~  Since $1+\sqrt{\frac{1}{n-1}}>1+\sqrt{\frac{1}{2n+1}}$, so the result of Corollary $1.1$ in the paper generalize those of Zhu in \cite{[26]}.

(b)~  ~When $\lambda(x,t)=0$, the result of Corollary $1.1$ in the paper is the result of Huang, Xie and Zeng in \cite{[9]}.

(c)~  ~\eqref{1.13} is different from Souplet-Zhang's result in \cite{[18]}. Moreover,  our results in form  seem to be simpler than Souplet-Zhang's result.

\section{\textbf{Preliminary}}
In this section, we derive a lemma.

\begin{lem}\cite{[21]}\quad
Let $A=(a_{ij})$ be a nonzero $n\times n$ symmetric matrix with eigenvalues $\lambda_k$, for any
$a,b\in \mathbf{R}$, then
$$\max_{A\in S(n);|v|=1}\left[\frac{aA+b\mathrm{tr} AI_{n}}{|A|}(v,v)\right]^{2}=(a+b)^{2}+(n-1)b^{2}.$$
\end{lem}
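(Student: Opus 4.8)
The plan is to collapse the joint maximization over symmetric matrices and unit vectors into two elementary optimizations, one on the unit sphere and one on the standard simplex. First I would exploit homogeneity: writing $B:=aA+b(\mathrm{tr}A)I_{n}$, the map $A\mapsto B$ is linear, so under $A\mapsto tA$ with $t>0$ the quotient $B(v,v)/|A|$ is unchanged; hence I may normalize $|A|=1$, where $|A|=(\sum_{k}\lambda_{k}^{2})^{1/2}$ is the Frobenius norm. Next, since $A$ is symmetric I would diagonalize it, $A=Q\Lambda Q^{\top}$ with $\Lambda=\mathrm{diag}(\lambda_{1},\dots,\lambda_{n})$; because $\mathrm{tr}A$, $|A|$, and the value set $\{B(v,v):|v|=1\}$ are all invariant under this orthogonal change of variables (replace $v$ by $Q^{\top}v$), the problem becomes maximizing $\big(a\sum_{k}\lambda_{k}v_{k}^{2}+b\sum_{k}\lambda_{k}\big)^{2}$ subject to $\sum_{k}\lambda_{k}^{2}=1$ and $\sum_{k}v_{k}^{2}=1$.

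Setting $w_{k}:=v_{k}^{2}$, so that $w$ ranges over the simplex $\Delta=\{w\ge 0:\sum_{k}w_{k}=1\}$, the bracketed quantity is the inner product $\langle\lambda,\,aw+b\mathbf{1}\rangle$, where $\mathbf{1}=(1,\dots,1)$ and $\lambda$ is free on the unit sphere. The two constraints decouple, since any unit vector $\lambda$ and any $w\in\Delta$ are realized by a genuine pair $(A,v)$. I would therefore first maximize over $\lambda$ by Cauchy--Schwarz, obtaining $\max_{|\lambda|=1}\langle\lambda,aw+b\mathbf{1}\rangle^{2}=|aw+b\mathbf{1}|^{2}=\sum_{k}(aw_{k}+b)^{2}$. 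Expanding and using $\sum_{k}w_{k}=1$ collapses this to $a^{2}\sum_{k}w_{k}^{2}+2ab+nb^{2}$, so the whole problem reduces to maximizing $\sum_{k}w_{k}^{2}$ over $\Delta$.

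The last step is the crux, though it is short: on $\Delta$ one has $w_{k}\in[0,1]$, hence $w_{k}^{2}\le w_{k}$ and $\sum_{k}w_{k}^{2}\le\sum_{k}w_{k}=1$, with equality exactly at the vertices $w=e_{j}$. Substituting the value $1$ yields $a^{2}+2ab+nb^{2}$, and I would conclude with the algebraic identity $a^{2}+2ab+nb^{2}=(a+b)^{2}+(n-1)b^{2}$, which is the claimed maximum. The only points requiring genuine care are the justification that the two inner maximizations are independent, so that Cauchy--Schwarz may legitimately be applied with $w$ held fixed, and the observation that the convex function $w\mapsto\sum_{k}w_{k}^{2}$ attains its maximum at an extreme point of the simplex; everything else is routine. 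As a sanity check, the maximizer corresponds to $A$ of rank one (a single nonzero eigenvalue) with $v$ the associated eigendirection, which fits the extremal configuration.
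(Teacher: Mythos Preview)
Your argument is correct: the reduction by homogeneity and diagonalization to maximizing $\langle\lambda,\,aw+b\mathbf{1}\rangle^{2}$ over $|\lambda|=1$ and $w\in\Delta$ is sound, the Cauchy--Schwarz step is legitimate because the two constraint sets are independent, and the simplex maximization of $\sum_{k}w_{k}^{2}$ is handled cleanly.

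As for comparison, the paper does not supply its own proof of this lemma; it is merely quoted from reference~[21] (Xu, \emph{J.\ Differ.\ Equ.}\ 252 (2012), 1403--1420) and then applied inside the proof of Lemma~2.2. So there is no in-paper argument to set yours against. Your self-contained derivation is a welcome addition, and the sanity check identifying the extremizer as a rank-one $A$ with $v$ along its eigendirection matches exactly how the lemma is used downstream (with $A=(v_{ij})$ and $v=\nabla v/|\nabla v|$).
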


\begin{lem}\quad
Let $1<m<1+\sqrt{\frac{1}{n-1}}$ and $\theta=\frac{1-(m-1)^{2}(n-1)}{4(m-1)}>0$. Then we have
\begin{align}\label{2.1}
\nonumber
(m-1)v\Delta w-w_{t}
\geq & \theta w^{2}v^{\beta-1}-2(m-1)kwv-m \nabla w\cdot\nabla v\\
\nonumber
&+\lambda\Big[\beta(m-1)-2(m+l-2)\Big]\left(\frac{m-1}{m}v\right)^{\frac{l-1}{m-1}}w\\
&-(m-1)\Big(\frac{m-1}{m}v\Big)^{\frac{l-1}{m-1}}\left(|\lambda|w+\frac{|\nabla\lambda|^2}{|\lambda|}\cdot\frac{1}{v^{\beta-2}}\right).
\end{align}
\end{lem}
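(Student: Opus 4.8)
The plan is to introduce the standard substitution and Bochner-type machinery used for porous-medium gradient estimates. Set $v=\frac{m}{m-1}u^{m-1}$, so that $u^{m-1}=\frac{m-1}{m}v$ and the equation $u_t=\Delta u^m+\lambda u^l$ transforms into a quasilinear equation for $v$; a direct computation shows $v_t=(m-1)v\Delta v+|\nabla v|^2+\lambda(m-1)\big(\tfrac{m-1}{m}v\big)^{\frac{l-1}{m-1}}v$ (up to the exact constants one must track). Then let $w=\frac{|\nabla v|^2}{v^{\beta}}$ with $\beta=-\frac{1}{m-1}$; the goal is to produce a differential inequality for $w$ of the stated form, so that later a cutoff/maximum-principle argument on $Q_{R,T}$ yields Theorem~1.1. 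The heart of the matter is therefore to compute $(m-1)v\Delta w-w_t$ and bound it below.

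First I would compute $\Delta w$ using $w=|\nabla v|^2 v^{-\beta}$: expand $\Delta(|\nabla v|^2)$ via the Bochner formula $\Delta|\nabla v|^2=2|\mathrm{Hess}\,v|^2+2\langle\nabla v,\nabla\Delta v\rangle+2\mathrm{Ric}(\nabla v,\nabla v)$, then use $\mathrm{Ric}\geq -k$ to get $\mathrm{Ric}(\nabla v,\nabla v)\geq -k|\nabla v|^2=-kwv^{\beta}$; the cross terms from differentiating $v^{-\beta}$ produce gradient terms of the type $\nabla w\cdot\nabla v$ and lower-order pieces. Next I would compute $w_t=\frac{2\langle\nabla v,\nabla v_t\rangle}{v^{\beta}}-\beta\frac{|\nabla v|^2 v_t}{v^{\beta+1}}$ and substitute $v_t$ from the evolution equation, and substitute $\Delta v$ from the same evolution equation wherever it appears, so that everything is expressed in terms of $v$, $\nabla v$, $\mathrm{Hess}\,v$, $w$, $\nabla w$, and the $\lambda$-terms. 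Collecting the Hessian contributions, one gets a term proportional to $|\mathrm{Hess}\,v|^2$ against a term proportional to $(\Delta v)^2$-type expressions coming from $v_t$; here I would invoke Lemma~2.1 (with $A=\mathrm{Hess}\,v$, and the scalar coefficients $a,b$ dictated by the computation) to bound the relevant quadratic form, which is exactly where the constant $\theta=\frac{1-(m-1)^2(n-1)}{4(m-1)}$ and the constraint $1<m<1+\sqrt{1/(n-1)}$ (ensuring $\theta>0$) enter. The $w^2 v^{\beta-1}$ term with coefficient $\theta$ is precisely the "good" term surviving after this optimization.

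For the reaction terms, differentiating the $\lambda(\tfrac{m-1}{m}v)^{\frac{l-1}{m-1}}v$ contribution in both $\Delta w$ and $w_t$ produces terms involving $\lambda$, $\nabla\lambda$, and powers of $v$; using $|\nabla\lambda|^2\leq\epsilon|\lambda|$ and $\lambda\geq 0$, together with Cauchy–Schwarz to absorb $\nabla\lambda\cdot\nabla v$ type cross terms into $\frac{|\nabla\lambda|^2}{|\lambda|}v^{2-\beta}$ and a harmless multiple of $|\lambda|w$, one arrives at exactly the last two lines of \eqref{2.1}: a term with the algebraic factor $\beta(m-1)-2(m+l-2)$ (which comes from combining the power $\frac{l-1}{m-1}$ appearing once from the Laplacian term scaling and once from the time-derivative scaling) and an error term $-(m-1)\big(\tfrac{m-1}{m}v\big)^{\frac{l-1}{m-1}}\big(|\lambda|w+\frac{|\nabla\lambda|^2}{|\lambda|}v^{2-\beta}\big)$. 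I expect the main obstacle to be the bookkeeping: correctly tracking all the constants through the substitution and through the Bochner expansion so that the coefficient of $\nabla w\cdot\nabla v$ is exactly $-m$ and the coefficient of $kwv$ is exactly $-2(m-1)$, and above all verifying that the application of Lemma~2.1 yields precisely $\theta$ and not merely some positive multiple. A secondary subtlety is handling the sign in the reaction terms when $\lambda$ vanishes (so $\frac{|\nabla\lambda|^2}{|\lambda|}$ must be interpreted as $0$ there, which is legitimate since $|\nabla\lambda|^2\leq\epsilon|\lambda|$ forces $\nabla\lambda=0$ on $\{\lambda=0\}$), and using $l\geq 1-m$ only implicitly through the sign/structure of the exponent $\frac{l-1}{m-1}$ in the later estimate rather than in this lemma itself.
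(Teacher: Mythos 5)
Your proposal follows essentially the same route as the paper's proof: the same substitution $v=\frac{m}{m-1}u^{m-1}$ and auxiliary quantity $w=|\nabla v|^{2}v^{-\beta}$, the Bochner/Ricci-commutation identity (which the paper writes in index notation via $v_{i}v_{ijj}-v_{i}v_{jji}=R_{ij}v_{i}v_{j}$), completion of the square in the Hessian combined with Lemma 2.1 to produce the coefficient $\theta$, and Cauchy--Schwarz on the $\nabla\lambda\cdot\nabla v$ cross term to obtain the last line of the inequality. The outline is sound and matches the paper; only the explicit bookkeeping (adding $\varepsilon\,\nabla w\cdot\nabla v$ with the optimal choice $\varepsilon=m$, $\beta=-\tfrac{1}{m-1}$, giving $f(\beta,\varepsilon)=4(m-1)^{2}(n-1)-4$) remains to be written out.
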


\begin{proof}
Let $v=\frac{m}{m-1}u^{m-1}$, then
\begin{equation}\label{2.2}
v_{t}=(m-1)v\Delta v+|\nabla v|^{2}+\lambda (m-1)\left(\frac{m-1}{m}\right)^{\frac{l-1}{m-1}}v^{1+\frac{l-1}{m-1}}.
\end{equation}
Let $w=\frac{|\nabla v|^2}{v^\beta}$, then
\begin{eqnarray}\label{2.3}
\nonumber
w_{t}&=&\frac{2v_{i}v_{it}}{v^\beta}-\beta\frac{v^{2}_{i}v_t}{v^{\beta+1}}\\
\nonumber
&=&\frac{2v_{i}\left[(m-1)v\Delta v+|\nabla v|^{2}+\lambda (m-1)\left(\frac{m-1}{m}\right)^{\frac{l-1}{m-1}}v^{1+\frac{l-1}{m-1}}\right]_i}{v^\beta}\\
\nonumber
&&-\beta\frac{v^{2}_{i}\left[(m-1)v\Delta v+|\nabla v|^{2}+\lambda (m-1)\left(\frac{m-1}{m}\right)^{\frac{l-1}{m-1}}v^{1+\frac{l-1}{m-1}}\right]}{v^{\beta+1}}\\
\nonumber
&=&2(m-1)\frac{v^{2}_{i}v_{jj}}{v^\beta}+2(m-1)\frac{v_{i}v_{jji}}{v^{\beta-1}}+4\frac{v_{i}v_{ij}v_{j}}{v^{\beta}}\\
\nonumber
&&+2\lambda\frac{(m+l-2)(\frac{m-1}{m}v)^{\frac{l-1}{m-1}}v^{2}_i}{v^\beta}+2(m-1)\Big(\frac{m-1}{m}v\Big)^{\frac{l-1}{m-1}}\frac{\nabla v\cdot\nabla\lambda}{v^{\beta-1}}\\
&&-\beta(m-1)\frac{v^{2}_{i}v_{jj}}{v^\beta}-\beta\frac{v^{2}_{i}v^{2}_{j}}{v^{\beta+1}}-\lambda\beta(m-1)\left(\frac{m-1}{m}v\right)^{\frac{l-1}{m-1}}\frac{v^{2}_i}{v^\beta},
\end{eqnarray}
\begin{equation}\label{2.4}
w_{j}=\frac{2v_{i}v_{ij}}{v^\beta}-\beta\frac{v^{2}_{i}v_j}{v^{\beta}},
\end{equation}
\begin{equation}\label{2.5}
w_{jj}=\frac{2v^{2}_{ij}}{v^\beta}+\frac{2v_{i}v_{ijj}}{v^\beta}-4\beta\frac{v_{i}v_{ij}v_j}{v^{\beta+1}}-\beta\frac{v^{2}_{i}v_{jj}}{v^{\beta+1}}+\beta(\beta+1)\frac{v^{2}_{i}v^{2}_{j}}{v^{\beta+2}}.
\end{equation}
By ~\eqref{2.4} and ~\eqref{2.5}
\begin{align}\label{2.6}
\nonumber
&(m-1)v\Delta w-w_{t}\\
\nonumber
=&2(m-1)\frac{v^{2}_{ij}}{v^{\beta-1}}+2(m-1)\frac{v_{i}v_{ijj}}{v^{\beta-1}}-2(m-1)\frac{v_{i}v_{jji}}{v^{\beta-1}}-4\beta(m-1)\frac{v_{i}v_{ij}v_{j}}{v^{\beta}}\\
\nonumber
&+\beta(\beta+1)(m-1)\frac{v^{2}_{i}v^{2}_{j}}{v^{\beta+1}}-2(m-1)\frac{v^{2}_{i}v_{jj}}{v^{\beta}}-4\frac{v_{i}v_{ij}v_j}{v^\beta}+\beta\frac{v^{2}_{i}v^{2}_j}{v^{\beta+1}}\\
\nonumber
&-2\lambda(m+l-2)\left(\frac{m-1}{m}v\right)^{\frac{l-1}{m-1}}\frac{v^{2}_{i}}{v^\beta}-2(m-1)\Big(\frac{m-1}{m}v\Big)^{\frac{l-1}{m-1}}\frac{\nabla v\cdot\nabla\lambda}{v^{\beta-1}}\\
\nonumber
&+\lambda\beta(m-1)\left(\frac{m-1}{m}v\right)^{\frac{l-1}{m-1}}\frac{v^{2}_{i}}{v^\beta}\\
\nonumber
=&2(m-1)\frac{v^{2}_{ij}}{v^{\beta-1}}+2(m-1)\frac{R_{ij}v_{i}v_{j}}{v^{\beta-1}}-4[1+\beta(m-1)]\frac{v_{i}v_{ij}v_j}{v^\beta}-2(m-1)\frac{v^{2}_{i}v_{jj}}{v^\beta}\\
\nonumber
&+\beta\big[(\beta+1)(m-1)+1\big]\frac{v^{i}v^{2}_{j}}{v^{\beta+1}}-2\lambda(m+l-2)\left(\frac{m-1}{m}v\right)^{\frac{l-1}{m-1}}\frac{v^{2}_{i}}{v^\beta}\\
&-2(m-1)\Big(\frac{m-1}{m}v\Big)^{\frac{l-1}{m-1}}\frac{\nabla v\cdot\nabla\lambda}{v^{\beta-1}}
+\lambda\beta(m-1)\left(\frac{m-1}{m}v\right)^{\frac{l-1}{m-1}}\frac{v^{2}_{i}}{v^\beta},
\end{align}
Since
\begin{equation}\label{2.7}
\nabla w\cdot\nabla v=\frac{2v_{i}v_{ij}v_j}{v^\beta}-\beta\frac{v^{2}_{i}v^{2}_j}{v^{\beta+1}}.
\end{equation}
Adding $\varepsilon\times$ ~\eqref{2.7} to ~\eqref{2.6},
\begin{align*}
\nonumber
&(m-1)v\Delta w-w_{t}\\
\nonumber
=&2(m-1)\frac{v^{2}_{ij}}{v^{\beta-1}}+2(m-1)\frac{R_{ij}v_{i}v_{j}}{v^{\beta-1}}+\big[2\varepsilon-4(1+\beta(m-1)\big]\frac{v_{i}v_{ij}v_j}{v^\beta}\\
\nonumber
&-2(m-1)\frac{v^{2}_{i}v_{jj}}{v^\beta}+\beta\big[(\beta+1)(m-1)+1-\varepsilon\big]\frac{v^{i}v^{2}_{j}}{v^{\beta+1}}-\varepsilon \nabla w\cdot\nabla v\\
&-2\lambda(m+l-2)\left(\frac{m-1}{m}v\right)^{\frac{l-1}{m-1}}\frac{v^{2}_{i}}{v^\beta}-2(m-1)\Big(\frac{m-1}{m}v\Big)^{\frac{l-1}{m-1}}\frac{\nabla v\cdot\nabla\lambda}{v^{\beta-1}}\\
\nonumber
&+\lambda\beta(m-1)\left(\frac{m-1}{m}v\right)^{\frac{l-1}{m-1}}\frac{v^{2}_{i}}{v^\beta}\\
\nonumber
=&2(m-1)\frac{|A|^2}{v^{\beta-1}}+2(m-1)R_{ij}wv+\big[2\varepsilon-4(1+\beta(m-1)\big]\frac{A(e,e)}{|A|}w|A|\\
\nonumber
&-2(m-1)\frac{\mathrm{tr} A}{|A|}w|A|+\beta\big[(\beta+1)(m-1)+1-\varepsilon\big]w^{2}v^{\beta-1}-\varepsilon \nabla w\cdot\nabla v\\
\nonumber
&-2\lambda(m+l-2)\left(\frac{m-1}{m}v\right)^{\frac{l-1}{m-1}}w+\lambda\beta(m-1)\left(\frac{m-1}{m}v\right)^{\frac{l-1}{m-1}}w\\
\nonumber
&-2(m-1)\Big(\frac{m-1}{m}v\Big)^{\frac{l-1}{m-1}}\frac{\nabla v\cdot\nabla\lambda}{v^{\beta-1}}\\
\nonumber
=&2(m-1)\frac{|A|^2}{v^{\beta-1}}+\left\{\Big[2\varepsilon-4[1+\beta(m-1)]\Big]\frac{A(e,e)}{|A|}-2(m-1)\frac{\mathrm{tr}A}{|A|}\right\}w|A|\\
\nonumber
&+2(m-1)R_{ij}wv+\beta\big[(\beta+1)(m-1)+1-\varepsilon\big]w^{2}v^{\beta-1}-\varepsilon \nabla w\cdot\nabla v\\
\nonumber
&+\lambda\Big[\beta(m-1)-2(m+l-2)\Big]\left(\frac{m-1}{m}v\right)^{\frac{l-1}{m-1}}w-2(m-1)\Big(\frac{m-1}{m}v\Big)^{\frac{l-1}{m-1}}\frac{\nabla v\cdot\nabla\lambda}{v^{\beta-1}}\\
\nonumber
=&2(m-1)\left\{\frac{|A|}{v^{\frac{\beta-1}{2}}}+\frac{1}{4(m-1)}\left[\Big(2\varepsilon-4[1+\beta(m-1)]\Big)\frac{A(e,e)}{|A|}-2(m-1)\frac{\mathrm{tr}A}{|A|}\right]wv^{\frac{\beta-1}{2}}\right\}^{2}\\
\nonumber
&-\frac{1}{8(m-1)}\left[\Big(2\varepsilon-4[1+\beta(n-1)]\Big)\frac{A(e,e)}{|A|}-2(m-1)\frac{\mathrm{tr}A}{|A|}\right]^{2}w^{2}v^{\beta-1}\\
\nonumber
&+2(m-1)R_{ij}wv+\beta\big[(\beta+1)(m-1)+1-\varepsilon\big]w^{2}v^{\beta-1}-\varepsilon \nabla w\cdot\nabla v\\
\nonumber
&+\lambda\Big[\beta(m-1)-2(m+l-2)\Big]\left(\frac{m-1}{m}v\right)^{\frac{l-1}{m-1}}w-2(m-1)\Big(\frac{m-1}{m}v\Big)^{\frac{l-1}{m-1}}\frac{\nabla v\cdot\nabla\lambda}{v^{\beta-1}}\\
\nonumber
\geq& -\frac{1}{8(m-1)}\left[\Big(2\varepsilon-4[1+\beta(m-1)]\Big)\frac{A(e,e)}{|A|}-2(m-1)\frac{\mathrm{tr}A}{|A|}\right]^{2}w^{2}v^{\beta-1}\\
\nonumber
&+2(m-1)R_{ij}wv+\beta\big[(\beta+1)(m-1)+1-\varepsilon\big]w^{2}v^{\beta-1}-\varepsilon \nabla w\cdot\nabla v\\
\nonumber
&+\lambda\Big[\beta(m-1)-2(m+l-2)\Big]\left(\frac{m-1}{m}v\right)^{\frac{l-1}{m-1}}w-2(m-1)\Big(\frac{m-1}{m}v\Big)^{\frac{l-1}{m-1}}\frac{\nabla v\cdot\nabla\lambda}{v^{\beta-1}},
\end{align*}
where $A_{ij}=(v_{ij})$ and  $e=\nabla v/|\nabla v|$. By  applying Lemma $2.1$ with
$a=2\varepsilon-4[1+\beta(n-1)$ and $b=-2(m-1)$,
\begin{align}\label{2.8}
\nonumber
&(m-1)v\Delta w-w_{t}\\
\nonumber
\geq &-\frac{1}{8(m-1)}\left\{\Big[2\varepsilon-4\big[1+\beta(m-1)\big]-2(m-1)\Big]^{2}+4(m-1)^{2}(n-1)\right\}w^{2}v^{\beta-1}\\
\nonumber
&+2(m-1)R_{ij}wv+\beta\big[(\beta+1)(m-1)+1-\varepsilon\big]w^{2}v^{\beta-1}-\varepsilon \nabla w\cdot\nabla v\\
\nonumber
&+\lambda\Big[\beta(m-1)-2(m+l-2)\Big]\left(\frac{m-1}{m}v\right)^{\frac{l-1}{m-1}}w-2(m-1)\Big(\frac{m-1}{m}v\Big)^{\frac{l-1}{m-1}}\frac{\nabla v\cdot\nabla\lambda}{v^{\beta-1}}\\
\nonumber
=&-\frac{1}{8(m-1)}f(\beta,\varepsilon)w^{2}v^{\beta-1}+2(m-1)R_{ij}wv-\varepsilon \nabla w\cdot\nabla v\\
\nonumber
&+\lambda\Big[\beta(m-1)-2(m+l-2)\Big]\left(\frac{m-1}{m}v\right)^{\frac{l-1}{m-1}}w\\
&-2(m-1)\Big(\frac{m-1}{m}v\Big)^{\frac{l-1}{m-1}}\frac{\nabla v\cdot\nabla\lambda}{v^{\beta-1}},
\end{align}
where
\begin{align}\label{2.9}
\nonumber
f(\beta,\varepsilon)=&\Big[2\varepsilon-4\big[1+\beta(m-1)\big]-2(m-1)\Big]^{2}+4(m-1)^{2}(n-1)\\
&-8(m-1)\beta\big[(\beta+1)(m-1)+1-\varepsilon\big].
\end{align}
For the purpose of showing that the coefficient of $w^{2}v^{\beta-1}$ is positive, we minimize the function $f(\beta,\varepsilon)$ by letting
$\varepsilon=m$ and $\beta=-\frac{1}{m-1}$,
such that
$$f(\beta,\varepsilon)=4(m-1)^{2}(n-1)-4.$$
Then ~\eqref{2.8} becomes
\begin{align}\label{2.10}
\nonumber
(m-1)v\Delta w-w_{t}
\geq &\frac{1-(m-1)^{2}(n-1)}{4(m-1)}w^{2}v^{\beta-1}-2(m-1)kwv-m \nabla w\cdot\nabla v\\
\nonumber
&+\lambda\Big[\beta(m-1)-2(m+l-2)\Big]\left(\frac{m-1}{m}v\right)^{\frac{l-1}{m-1}}w\\
\nonumber
&-2(m-1)\Big(\frac{m-1}{m}v\Big)^{\frac{l-1}{m-1}}\frac{|\nabla v|\cdot|\nabla\lambda|}{v^{\beta-1}}\\
\nonumber
=& \theta w^{2}v^{\beta-1}-2(m-1)kwv-m \nabla w\cdot\nabla v\\
\nonumber
&+\lambda\Big[\beta(m-1)-2(m+l-2)\Big]\left(\frac{m-1}{m}v\right)^{\frac{l-1}{m-1}}w\\
&-(m-1)\Big(\frac{m-1}{m}v\Big)^{\frac{l-1}{m-1}}\left(|\lambda|w+\frac{|\nabla\lambda|^2}{|\lambda|}\cdot\frac{1}{v^{\beta-2}}\right),
\end{align}
where $\theta=\frac{1-(m-1)^{2}(n-1)}{4(m-1)}>0$ as $1<m<1+\sqrt{\frac{1}{n-1}}$, and in the last inequality we utilize the fact that
\begin{align*}
&-2(m-1)\Big(\frac{m-1}{m}v\Big)^{\frac{l-1}{m-1}}\frac{|\nabla v|\cdot|\nabla\lambda|}{v^{\beta-1}}\\
&\geq-(m-1)\Big(\frac{m-1}{m}v\Big)^{\frac{l-1}{m-1}}\left(|\lambda|w+\frac{|\nabla\lambda|^2}{|\lambda|}\cdot\frac{1}{v^{\beta-2}}\right).
\end{align*}
\end{proof}

\section{\textbf{Proof of main results}}

From here, we will utilize the well-known cut-off function of Li and
Yau to derive the desire bounds.

\begin{proof}[\textbf{Proof of Theorem 1.1}]\quad Assume that a function $\Psi=\Psi(x,t)$  is a smooth cut-off
function
supported in $Q_{R,T}$, satisfying the following properties,\\
(1) $\Psi=\Psi(d(x,x_{0}),t)\equiv \psi(r,t)$; $\Psi(r,t)=1$ in
$Q_{R/2,T/2}$, $0\leq \Psi\leq 1$.\\
(2) $\Psi$ is decreasing as a radial function in the spatial
variables.\\
(3) $|\partial_{r}\Psi|/\Psi^{a}\leq C_{a}/R$,
$|\partial^{2}_{r}\Psi|/\Psi^{a}\leq C_{a}/R^{2}$ when $0<a<1$.\\
(4) $|\partial_{t}\Psi|/\Psi^{1/2}\leq C/T$.

Assume that the maximum of $\Psi w$ is arrived at point $(x_{1},
t_{1})$. By \cite{[13]}, we can suppose, without loss of generality, that
$x_1$ is not on the cut-locus of $\mathbf{M}^n$. Therefore, at
$(x_{1}, t_1)$, it yields $\Delta(\Psi w)\leq 0$, $(\Psi w)_{t}\geq
0$ and $\nabla(\Psi w)=0$.  Hence, by ~\eqref{2.1} and a straightforward
calculation, it yields that
\begin{align}\label{3.1}
\nonumber
0\geq& \Big[(m-1)v\Delta-\partial_{t}\Big](\Psi w)\\
\nonumber
=& \Psi\big[(m-1)v\Delta-\partial_{t}\big]w+(m-1)vw\Delta\Psi-w\Psi_{t}+2(m-1)\frac{v}{\Psi}\nabla\Psi\cdot\nabla(\Psi w)\\
\nonumber
&-2(m-1)vw\frac{|\nabla\Psi|^2}{\Psi}\\
\nonumber
=&\Psi\theta w^{2}v^{\beta-1}-2(m-1)\Psi kwv-p \nabla (\Psi w)\cdot\nabla v+mw\nabla v\cdot\nabla\Psi\\
\nonumber
&+\Psi\lambda\Big[\beta(m-1)-2(m+l-2)\Big]\left(\frac{m-1}{m}v\right)^{\frac{l-1}{m-1}}w\\
\nonumber
&-(m-1)\Psi\Big(\frac{m-1}{m}v\Big)^{\frac{l-1}{m-1}}\left(\lambda w+\frac{|\nabla\lambda|^2}{\lambda}\cdot\frac{1}{v^{\beta-2}}\right)+(m-1)vw\Delta\Psi\\
&-w\Psi_{t}+2(m-1)\frac{v}{\Psi}\nabla\Psi\cdot\nabla(\Psi w)-2(m-1)vw\frac{|\nabla\Psi|^2}{\Psi}.
\end{align}
Then ~\eqref{3.1} becomes at the point $(x_{1},t_{1})$
\begin{align}\label{3.2}
\nonumber
\Psi\theta w^{2}v^{\beta-1}\leq &2(m-1)\Psi kwv-mw\nabla v\cdot\nabla\Psi-(m-1)vw\Delta\Psi+2(m-1)vw\frac{|\nabla\Psi|^2}{\Psi}\\
\nonumber
&+w\Psi_{t}-\Psi\lambda\Big[\beta(m-1)-2(m+l-2)\Big]\left(\frac{m-1}{m}v\right)^{\frac{l-1}{m-1}}w\\
&+(m-1)\Psi\Big(\frac{m-1}{m}v\Big)^{\frac{l-1}{m-1}}\left(\lambda w+\frac{|\nabla\lambda|^2}{\lambda}\cdot\frac{1}{v^{\beta-2}}\right).
\end{align}
Now setting $\theta=\frac{2}{\gamma}\cdot\frac{1}{m-1}$ and $\gamma=\frac{8}{1-(m-1)^{2}(n-1)}$, then ~\eqref{3.2} gives
\begin{align}\label{3.3}
\nonumber
2\Psi w^{2}\leq &2\gamma(m-1)^{2}\Psi kwv^{2-\beta}-\gamma(m-1) v^{1-\beta}mw\nabla v \cdot\nabla\Psi-(m-1)^{2}\gamma v^{2-\beta}w\Delta\Psi\\
\nonumber
&+2(m-1)^{2}\gamma v^{2-\beta}w\frac{|\nabla\Psi|^2}{\Psi}+\gamma(m-1) w\Psi_{t}v^{1-\beta}\\
\nonumber
&-(m-1)\gamma\Psi\lambda\Big[(\beta-1)(m-1)-2(m+l-2)\Big]\left(\frac{m-1}{m}v\right)^{\frac{l-1}{m-1}}w v^{1-\beta}\\
&+(m-1)^{2}\gamma\Psi\Big(\frac{m-1}{m}v\Big)^{\frac{l-1}{m-1}}\frac{|\nabla\lambda|^2}{\lambda}\cdot\frac{v^{1-\beta}}{v^{\beta-2}}.
\end{align}
Now, we need to search for an upper bound for each term on the right-hand side of ~\eqref{3.3}. After a siample calculation,
it is not diffucult to find the following estimates.
\begin{align}\label{3.4}
&2\gamma(m-1)^{2}\Psi kwv^{2-\beta}\leq \frac{1}{4}\Psi w^{2}+C\gamma^{2}(m-1)^{4}M^{4-2\beta}k^{2},
\end{align}
\begin{align}\label{3.5-3.8}
&-(m-1)\gamma v^{1-\beta}mw\nabla v \cdot\nabla\Psi\leq \frac{1}{4}\Psi w^{2}+C\gamma^{2}(m-1)^{4}M^{4-2\beta}\frac{1}{R^{4}},\\
&-(m-1)^{2}\gamma v^{2-\beta}w\Delta\Psi\leq\frac{1}{4}\Psi w^{2}+C\gamma^{2}(m-1)^{4}M^{4-2\beta}\left(\frac{1}{R^{4}}+\frac{k}{R^2}\right),\\
&2(m-1)^{2}\gamma v^{2-\beta}w\frac{|\nabla\Psi|^2}{\Psi}\leq \frac{1}{4}\Psi w^{2}+C\gamma^{2}(m-1)^{4}M^{4-2\beta}\frac{1}{R^{4}},\\
&(m-1)\gamma w\Psi_{t}v^{1-\beta}\leq\frac{1}{4}\Psi w^{2}+C\gamma^{2}(m-1)^{4}M^{4-2\beta}\frac{1}{T^{2}},
\end{align}
where  $C$ is a constant and we used the fact that $0<v\leq M$, $\beta=-\frac{1}{m-1}<0$.

Applying $0<v\leq M$, $\beta=-\frac{1}{m-1}<0$ and $m>1$  we now give estimates for the last two items of ~\eqref{3.3}.
\begin{align}\label{3.9}
\nonumber
&-(m-1)\gamma\Psi\lambda\Big[(\beta+1)(m-1)-2(m+l-2)\Big]\left(\frac{m-1}{m}v\right)^{\frac{l-1}{m-1}}w v^{1-\beta}\\
&\leq\frac{1}{4}\Psi w^{2}+C_{1} \delta^{2}M^{\frac{2m+2l-2}{m-1}},
\end{align}
where $C_{1}=C_{1}(m,n,l)$,  and inequality  holds for $l\leq 1-m$.
\begin{align}\label{3.10}
(m-1)^{2}\gamma\Psi\Big(\frac{m-1}{m}v\Big)^{\frac{l-1}{m-1}}\frac{|\nabla\lambda|^2}{|\lambda|}\cdot\frac{v^{1-\beta}}{v^{\beta-2}}\leq C_{2}\epsilon M^{\frac{3m+l-2}{m-1}}.
\end{align}
where $C_{2}=C_{2}(m,n,l)$, and  inequality is valid for $l\geq 2-3m$. Hence, both ~\eqref{3.9} and ~\eqref{3.10} hold for $l\geq 1-m$. \\
Substituting ~\eqref{3.4}--~\eqref{3.10} into ~\eqref{3.3}, we have for  $l\geq 1-m$ and $C_{3}=C_{3}(m,n,l)$
\begin{align}\label{3.11}
\nonumber
2\Psi w^{2}\leq& \frac{3}{2}\Psi w^{2}+C\gamma^{2}(m-1)^{4}M^{4-2\beta}\left(\frac{1}{R^{4}}+k^{2}+\frac{1}{T^2}\right)\\
&+C_{3}(\delta^{2} M^{\frac{2m+2l-2}{m-1}}+\epsilon M^{\frac{3m+l-2}{m-1}}),
\end{align}
which gives at the point $(x_{1},
t_{1})$
\begin{align}\label{3.12}
\nonumber
\Psi w^{2}\leq& C\gamma^{2}(m-1)^{4}M^{4-2\beta}\left(\frac{1}{R^{4}}+k^{2}+\frac{1}{T^2}\right)\\
&+C_{3}(\delta^{2} M^{\frac{2m+2l-2}{m-1}}+\epsilon M^{\frac{3m+l-2}{m-1}}).
\end{align}
Hnece, for all the point $(x,t)\in Q_{R,T}$,
\begin{align}\label{3.13}
\nonumber
(\Psi^{2} w^{2})(x,t)\leq&(\Psi^{2} w^{2})(x_{1},t_{1})\leq(\Psi w^{2})(x_{1},t_{1})\\
\nonumber
\leq& C\gamma^{2}(m-1)^{4}M^{4-2\beta}\left(\frac{1}{R^{4}}+k^{2}+\frac{1}{T^2}\right)\\
&+C_{3}(\delta^{2} M^{\frac{2m+2l-2}{m-1}}+\epsilon M^{\frac{3m+l-2}{m-1}}).
\end{align}
Notice that $\Psi=1$ in $Q_{R/2,T/2}$ and $w=\frac{|\nabla v|^2}{v^\beta}$. Therefore, we have for  $l\geq 1-m$,
\begin{align*}
\nonumber
\frac{|\nabla v|}{v^{\frac{\beta}{2}}}(x,t)\leq& C\gamma^{2}(m-1)M^{1-\frac{\beta}{2}}\left(\frac{1}{R}+\sqrt{k}+\frac{1}{\sqrt{T}}\right)\\
&+C_{3}\big(\delta^{\frac{1}{2}} M^{\frac{m+l-1}{2(m-1)}}+\epsilon^{\frac{1}{4}} M^{\frac{3m+l-2}{4(m-1)}}\big).
\end{align*}
\end{proof}

\begin{proof}[\textbf{Proof of Corollary 1.2}]\quad
By taking $\lambda(x,t)=0$ in ~\eqref{1.11}, we deduce that
\begin{align}\label{3.14}
\frac{|\nabla v|}{v^{\frac{\beta}{2}}}(x,t)\leq& C\gamma^{2}(m-1)M^{1-\frac{\beta}{2}}\left(\frac{1}{R}+\sqrt{k}+\frac{1}{\sqrt{T}}\right)
\end{align}
 Applying $v=\frac{m}{m-1}u^{m-1}$ to ~\eqref{3.14}, we obtain
\begin{align}\label{3.15}
m\cdot m^{\frac{1}{2(m-1)}}\frac{|\nabla u|}{u^{\frac{3}{2}-m}}(x,t)\leq& C\gamma^{2}\bigl[(m-1)M\bigr]^{1+\frac{1}{2(m-1)}}\left(\frac{1}{R}+\sqrt{k}+\frac{1}{\sqrt{T}}\right)
\end{align}
Since $(m-1)v=mu^{m-1}$, we have $(m-1)v\rightarrow 1$ as $m\searrow 1$. Therefore, we follow $(m-1)M\rightarrow 1$ as $m\searrow 1$. A sample computation yields
$$\lim_{m\rightarrow 1^+}\bigl[(m-1)M\bigr]^{\frac{1}{4(m-1)}}
=\lim_{m\rightarrow 1^+}\bigl[1+(m-1)M-1\bigr]^{\frac{1}{(m-1)M-1}\cdot\frac{(m-1)M-1}{2(m-1)}}=e^{\frac{1}{2}},$$
$$\lim_{m\rightarrow 1^+}\bigl[m\bigr]^{\frac{1}{2(m-1)}}=\lim_{m\rightarrow 1^+}\bigl[1+m-1\bigr]^{\frac{1}{m-1}\cdot\frac{1}{2}}=e^{\frac{1}{2}},$$
$$\lim_{m\rightarrow 1^+}\gamma=\lim_{m\rightarrow 1^+}\frac{8}{1-(m-1)^{2}(n-1)}=8.$$
Hence as  $m\searrow 1$, ~\eqref{3.15} becomes
\begin{align*}
\frac{|\nabla u|}{u^{\frac{1}{2}}}(x,t)\leq& C\left(\frac{1}{R}+\sqrt{k}+\frac{1}{\sqrt{T}}\right),
\end{align*}
where $C=C(n)$.
\end{proof}

\begin{proof}[\textbf{Proof of Theorem 1.2}]\quad
Since $\beta=-\frac{1}{m-1}<0$ and $m>1$, then $(\beta-1)(m-1)-2(m+l-2)\geq 0$ for $l\leq 2-\frac{3}{2}m$. Hence, ~\eqref{3.3} becomes
\begin{align}\label{3.16}
\nonumber
2\Psi w^{2}\leq &2\gamma(m-1)^{2}\Psi kwv^{2-\beta}-\gamma(m-1) v^{1-\beta}pw\nabla v \cdot\nabla\Psi-(m-1)^{2}\gamma v^{2-\beta}w\Delta\Psi\\
\nonumber
&+2(m-1)^{2}\gamma v^{2-\beta}w\frac{|\nabla\Psi|^2}{\Psi}+\gamma(m-1) w\Psi_{t}v^{1-\beta}\\
&+(m-1)^{2}\gamma\Psi\Big(\frac{m-1}{m}v\Big)^{\frac{l-1}{m-1}}\frac{|\nabla\lambda|^2}{\lambda}\cdot\frac{v^{1-\beta}}{v^{\beta-2}}.
\end{align}
A discussion of similar Theorem $1.1$ from  (3.4)-(3.8), ~\eqref{3.10} and ~\eqref{3.16},  we have for  $ 2-3m\leq l\leq 2-\frac{3}{2}m$ and $C_{3}=C_{3}(m,n,l)$
\begin{align*}
\frac{|\nabla v|}{v^{\frac{\beta}{2}}}(x,t)\leq C\gamma^{2}(m-1)M^{1-\frac{\beta}{2}}\left(\frac{1}{R}+\sqrt{k}+\frac{1}{\sqrt{T}}\right)+C_{3}\epsilon^{\frac{1}{4}} M^{\frac{3m+l-2}{4(m-1)}}.
\end{align*}
\end{proof}
\section{\textbf{Applications}}
In this section, we will deduce some related Liouville type theorems.

Applying Corollary $1.1$, it follows the following Liouville type theorem.
\begin{thm}[Liouville type theorem]
 Let $(M^{n}, g)$ be a complete, non-compact
Riemannian manifold with nonnegative Ricci curvature. Suppose that  $u$ is a positive ancient solution of the  equation ~\eqref{1.2}
such that $v(x,t)=o(d(x)+\sqrt{T})^{\frac{2(m-1)}{2m-1}}$, where $v=\frac{m}{m-1}u^{m-1}$. Then $u$ is a constant.
\end{thm}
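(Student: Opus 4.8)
The plan is to derive Theorem 4.1 from Corollary 1.1 by a limiting argument, using that an ancient solution is defined on all of $(-\infty,t_0]$ so that in $Q_{R,T}=B_{x_0}(R)\times[t_0-T,t_0]$ we may let both $R$ and $T$ tend to infinity.

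First I would fix an arbitrary point $(x,t)\in M^n\times(-\infty,t_0]$. Since $\mathrm{Ric}(M^n)\ge 0$ we take $k=0$, and, as in Corollary 1.1, we work in the range $1<m<1+\sqrt{1/(n-1)}$, so that $\gamma=\frac{8}{1-(m-1)^2(n-1)}$ is a finite constant. For $R>2\,d(x,x_0)$ and $T>2(t_0-t)$ the point $(x,t)$ lies in $Q_{R/2,T/2}$, hence with $M_{R,T}:=\sup_{Q_{R,T}}v$ Corollary 1.1 yields
\begin{equation*}
\frac{|\nabla v|}{v^{\beta/2}}(x,t)\le C\gamma^2(m-1)\,M_{R,T}^{\,1-\beta/2}\Big(\frac1R+\frac1{\sqrt T}\Big),\qquad \beta=-\tfrac1{m-1}.
\end{equation*}

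Next I would feed in the growth hypothesis. The key observation is that $1-\frac{\beta}{2}=1+\frac1{2(m-1)}=\frac{2m-1}{2(m-1)}$ is exactly the reciprocal of the exponent $\frac{2(m-1)}{2m-1}$ appearing in the hypothesis. Since every point of $Q_{R,T}$ has distance at most $R$ from $x_0$, the assumption $v=o\big((d(x)+\sqrt T)^{\frac{2(m-1)}{2m-1}}\big)$ gives $M_{R,T}=o\big((R+\sqrt T)^{\frac{2(m-1)}{2m-1}}\big)$, and therefore $M_{R,T}^{\,1-\beta/2}=o(R+\sqrt T)$ as $R,T\to\infty$. Substituting this bound and choosing $T=R^2$, the right-hand side above becomes $o(R)\cdot O(1/R)=o(1)$, which tends to $0$ as $R\to\infty$. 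Hence $\frac{|\nabla v|}{v^{\beta/2}}(x,t)=0$, and since $v>0$ this forces $\nabla v(x,t)=0$.

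Finally, as $(x,t)$ was arbitrary, $\nabla v\equiv 0$ on $M^n\times(-\infty,t_0]$, so $v=v(t)$ depends on time alone and in particular $\Delta v\equiv 0$; substituting into the evolution equation~\eqref{2.2} (with $\lambda\equiv 0$, since~\eqref{1.2} has no reaction term) gives $v_t\equiv 0$. Thus $v$ is constant, and therefore $u=\big(\tfrac{m-1}{m}v\big)^{1/(m-1)}$ is constant, as claimed. I expect the only delicate point to be the exponent bookkeeping of the second step: one must check that the power of $M_{R,T}$ produced by Corollary 1.1 is precisely the reciprocal of the growth exponent $\frac{2(m-1)}{2m-1}$, so that the polynomial growth of $M_{R,T}$ is exactly cancelled by the decay factor $\frac1R+\frac1{\sqrt T}$ (yielding $o(1)$ rather than $O(1)$ or worse); the rest is routine.
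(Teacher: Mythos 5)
Your proposal is correct and follows essentially the same route as the paper: apply the gradient estimate \eqref{1.12} with $k=0$ on $B(x_{0},R)\times[t_{0}-R^{2},t_{0}]$, note that $1-\tfrac{\beta}{2}=\tfrac{2m-1}{2(m-1)}$ is the reciprocal of the growth exponent so the right-hand side is $\tfrac{C}{R}\cdot o(R)\to 0$, and conclude $\nabla v\equiv 0$. Your final paragraph (passing from $\nabla v\equiv 0$ to $v_{t}\equiv 0$ via \eqref{2.2}) is a detail the paper leaves implicit, and is a welcome addition.
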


By utilize Corollary $1.2$, the related Liouville type theorem is derived, as follows.
\begin{thm}[Liouville type theorem]
 Let $(M^{n}, g)$ be a complete, non-compact
Riemannian manifold with nonnegative Ricci curvature. Suppose that  $u$ is a positive ancient solution of the heat equation ~\eqref{1.7}
such that $u(x,t)=o(d(x)+\sqrt{T})^{2}$. Then $u$ is a constant.
\end{thm}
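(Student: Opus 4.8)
The plan is to run the classical Liouville argument for parabolic gradient estimates: apply the heat-equation estimate of Corollary 1.2 on an exhausting family of parabolic cylinders and let their size tend to infinity, using the growth hypothesis to force the right-hand side to vanish in the limit.

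First I would fix an arbitrary point $(x_1,t_1)$ in the space-time domain of $u$, together with the base point $x_0$ appearing in the growth condition, and aim to show $\nabla u(x_1,t_1)=0$. Since $u$ is ancient, for every $R>2\,d(x_1,x_0)$ and every $T>0$ the cylinder $Q_{R,T}=B_{x_0}(R)\times[t_1-T,t_1]$ lies in the domain of $u$ and contains $(x_1,t_1)$ in $Q_{R/2,T/2}$, and $M_{R,T}:=\sup_{Q_{R,T}}u$ is finite. Because $\mathrm{Ric}(M^n)\ge 0$ we take $k=0$. Applying Corollary 1.2 on $Q_{R,T}$ with $M=M_{R,T}$ — tracking, through the limit $m\searrow 1$ in its proof, that the supremum of $u$ over the cylinder enters the bound through a factor $\sqrt{M_{R,T}}$ — gives
\[
\frac{|\nabla u|}{\sqrt u}(x_1,t_1)\;\le\;C\sqrt{M_{R,T}}\left(\frac1R+\frac1{\sqrt T}\right),\qquad C=C(n).
\]

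Next I would take $T=R^2$ and let $R\to\infty$. The hypothesis $u(x,t)=o\big((d(x)+\sqrt T)^2\big)$ is precisely the statement that $M_{R,R^2}=o\big((R+R)^2\big)=o(R^2)$, so $\sqrt{M_{R,R^2}}=o(R)$ and the estimate becomes
\[
\frac{|\nabla u|}{\sqrt u}(x_1,t_1)\;\le\;C\,o(R)\cdot\frac2R\;=\;o(1)\;\longrightarrow\;0\qquad(R\to\infty).
\]
Hence $\nabla u(x_1,t_1)=0$; since $(x_1,t_1)$ is arbitrary and $M^n$ is connected, $u$ depends on $t$ only, and then $u'(t)=u_t=\Delta u=0$, so $u$ is a constant.

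The step needing genuine care — and the one that explains the exponent $2$ in the growth hypothesis — is the dependence of the right-hand side of Corollary 1.2 on $M_{R,T}$: one must retrace the limit $m\searrow 1$ in the proof of Corollary 1.2 to confirm that $\sup_{Q_{R,T}}u$ contributes a factor comparable to $\sqrt{M_{R,T}}$ and no larger power, and then balance $T$ against $R$ (the choice $T\asymp R^2$ works) so that both $\sqrt{M_{R,T}}/R$ and $\sqrt{M_{R,T}}/\sqrt{T}$ tend to $0$. Everything else is the routine cylinder-exhaustion argument, entirely parallel to the proof of Theorem 4.1 via Corollary 1.1.
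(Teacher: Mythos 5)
Your proof is correct and follows essentially the same route as the paper: the paper proves Theorem 4.1 by applying its gradient estimate on the cylinders $B(x_0,R)\times[t_0-R^2,t_0]$ and letting $R\to\infty$, and simply declares that Theorem 4.2 is obtained the same way from Corollary 1.2. If anything you are more careful than the paper, whose Corollary 1.2 as stated carries no dependence on $M=\sup u$ at all (which would make the growth hypothesis superfluous); your insistence on retracing the limit $m\searrow 1$ to recover the factor $\sqrt{M_{R,T}}$, and your final step passing from $\nabla u\equiv 0$ to $u_t=\Delta u=0$, supply exactly the details the paper leaves implicit.
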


The proof of Theorem $4.1$ and Theorem $4.2$ are the same. So we only prove Theorem $4.1$.

\begin{proof}[Proof of Theorem 4.1]
Fix $(x_{0}, t_{0})$in space time. Assume that $u(x,t)$ is a positive ancient solution to PME ~\eqref{1.2} such that $v(x,t)=o(d(x)+\sqrt{T})^{\frac{2(m-1)}{2m-1}}$ near infity.
Applying ~\eqref{1.12} to $u$ on the cube $B(x_{0}, R)\times[t_{0}-R^{2}, t_0]$, then we have
\begin{align*}
v(x_{0}, t_0)^{\frac{1}{2(m-1)}}|\nabla v(x_{0}, t_0)|\leq \frac{C}{R}\cdot o(R).
\end{align*}
Let $R\rightarrow\infty$, we get $|\nabla v(x_{0}, t_0)|=0$. Therefore, the result are
derived.
\end{proof}

\section{\textbf{Acknowledgement}}
We are grateful to Professor Jiayu Li for his encouragement. We also thank Professor Qi S Zhang for introduction of this
problem in the summer course.


\end{document}